\newtheorem{thm}{Theorem}
\newtheorem{lem}[thm]{Lemma}
\newtheorem{prop}[thm]{Proposition}
\newtheorem{wn}[thm]{Corollary}
\newtheorem{obs}[thm]{Observation}
\newenvironment{proof}{{\bf Proof.}}{\hspace*{\fill} \rule{2mm}{2mm} \par \hspace{0.1mm}}
\title{Domination subdivision and domination multisubdivision numbers of graphs}
\author{Magda Dettlaff$^{\dagger}$, Joanna Raczek$^{\dagger}$ and
 Jerzy Topp$^{\ddagger}$
\\
\\
$^{\dagger}${\small Faculty of Applied Physics and Mathematics } \\ {\small Gdansk University of Technology, ul. Narutowicza 11/12, 80-233 Gda\'nsk, Poland }\\ {\small {\tt mdettlaff\@@mif.pg.gda.pl}}, {\small {\tt Joanna.Raczek\@@pg.gda.pl}}
\\
$^{\ddagger}${\small Faculty of Mathematics, Physics and Informatics 
}\\
{\small  University of Gda\'nsk, ul. Wita Stwosza 57, 80-952 Gda\'nsk, Poland} \\{\small {\tt Jerzy.Topp\@@inf.ug.edu.pl}}}
\date{}
\begin{document}
\maketitle
\begin{abstract} The \emph{domination subdivision number} sd$(G)$ of a graph $G$ is the minimum number of edges that must be subdivided (where an edge can be subdivided at most once) in order to increase the domination number of $G$. It has been shown \cite{vel} that sd$(T)\leq 3$ for any tree $T$. We prove that the decision problem of the domination subdivision number is NP-complete even for bipartite graphs. For this reason we define the \emph{domination multisubdivision number} of a nonempty graph $G$ as a minimum positive integer $k$ such that there exists an edge which must be subdivided $k$ times to increase the domination number of $G$. We show that msd$(G)\leq 3$ for any graph $G$. The domination subdivision number and the domination multisubdivision numer of a graph are incomparable in general case, but we show that for trees these two parameters are equal. We also determine domination multisubdivision number for some classes of graphs. 
\end{abstract}

{\it Keywords:} Domination; domination subdivision number; domination multisubdivision number; trees; computational complexity.

{\it AMS Subject Classification Numbers:}  05C69; 05C05; 05C99.

\section{Introduction and motivation}

For domination problems, multiple edges and loops are irrelevant, so we forbid them. Additionally, in this paper we consider connected graphs only. We use $V(G)$ and $E(G)$ for the vertex set and the edge set of a graph $G$ and denote $|V(G)|=n$, $|E(G)|=m$.  

A subset $D$ of $V(G)$ is \emph{dominating} in $G$ if every vertex of $V(G)-D$ has at least one neighbour in $D.$ Let $\gamma (G)$ be the minimum cardinality among all dominating sets in $G.$ A minimum dominating set of a graph $G$ is called a $\gamma (G)$-set.

For a graph $G=(V,E)$ subdivision of the edge $e=uv\in E$ with vertex $x$ leads to a graph with vertex set $V\cup \{x\}$ and edge set $(E-\{uv\})\cup \{ux,xv\}$. 
Let $G_{e,t}$ denote graph obtained from $G$ by subdivision of the edge $e$ with $t$ vertices (instead of edge $e=uv$ we put a path $(u,x_1,x_2,\ldots ,x_t,v)$). For $t=1$ we write $G_e$.


The \emph{domination subdivision number}, sd$(G)$, of a graph $G$ is the minimum number of edges which must be subdivided (where each edge can be subdivided at most once) in order to increase the domination number. Since the domination number of the graph $K_2$ does not increase when its only edge is subdivided, we consider subdivision number for connected graphs of order at least $3$. The domination subdivision number was defined by Velammal in 1997 (see \cite{vel}) and since then it is widely studied in graph theory papers.  This parameter was studied in trees by Aram, Sheikholeslami and by Favaron~\cite{fav1} and also by  Benecke and Mynhardt~\cite{myn}. General bounds and properties has been studied for example by Haynes, Hedetniemi and Hedetniemi~\cite{HHH}, by  Bhattacharya and  Vijayakumar~\cite{bhv}, by Favaron, Haynes and Hedetniemi~\cite{fav3} and  by  Favaron, Karami and Sheikholeslami~\cite{fav2}. In this paper we continue the study of domination subdivision numbers of graphs by proving that the decision problem of domination subdivision number is NP-complete even for bipartite graphs. For this reason we define msd$(uv)$ to be the minimum number of subdivisions of the edge $uv$ such that $\gamma(G)$ increases. Moreover, let the \emph{domination multisubdivision number} of a graph $G, m>0$, denoted by msd$(G)$, be defined as 
\[
\mbox{msd}(G)=\min \{\mbox{msd}(uv):\ uv\in E(G)\}.
\]

Domination multisubdivision number is well defined for all graphs having at least one edge.

\section{Notation}
The \emph{neighbourhood} $N_{G}(v)$ of a vertex $v\in V(G)$ is the set of all vertices adjacent to $v.$  The \emph{degree} of a vertex $v$ is $d_{G}(v)=|N_{G}(v)|.$ 

We say that a vertex $v$ of a graph $G$ is a {\it leaf\/} if $v$ has exactly one neighbour in $G$.  A vertex $v$ is called a {\it support vertex} if it is adjacent to a leaf. If $v$ is adjacent to more than one leaf, then we call $v$ a {\it strong support vertex}. 05C99

A path $(x,v_1,\ldots ,v_l,y)$ connecting two vertices $x$ and $y$ in a graph $G$ we call an $(x-y)$--\emph{path}. The vertices $v_1,\ldots ,v_l$ are its internal vertices. The length of a shortest such path is called the distance between $x$ and $y$ and denoted $d_G(x,y)$. The diameter diam$(G)$ of a connected graph G is the maximum distance between two vertices of $G$. For subsets $X$ and $Y$ of $V(G)$, an $(X-Y)$--\emph{path} is a path which starts at a vertex of $X$, ends at a vertex of $Y$, and whose internal vertices belong to neither $X$ nor $Y$. If $X=\{x\}$, then we write $(x-Y)$--path.

The \emph{private neighbourhood of a vertex u with respect to a set} $D \subseteq V (G)$, where $u \in D$, is the set PN$_G[u,D] = N_G[u] - N_G[D - \{u\}]$. If $v \in $PN$_G[u,D]$, then we say that $v$ is a private neighbour of $u$ with respect to the set $D$.

For any unexplained terms and symbols see \cite{fund}.
\section{NP-completeness of domination subdivision problem}
The decision problem of domination subdivision problem is in this paper stated as follows:\\
DOMINATION SUBDIVISION NUMBER (DSN)\\
INSTANCE: Graph $G=(V,E)$ and the domination number $\gamma(G)$.\\
QUESTION: Is sd$(G)>1?$

\begin{thm}\label{thm:NPC}
\emph{DOMINATION SUBDIVISION NUMBER} is \emph{NP}-complete even for bipartite graphs.
\end{thm}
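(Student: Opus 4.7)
The plan is to verify membership in NP via a structural characterization, and then to prove NP-hardness by a polynomial-time reduction from \textsc{Dominating Set} restricted to bipartite graphs.

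First I would record the following observation, which drives the whole argument: for any edge $uv\in E(G)$, $\gamma(G_{uv})>\gamma(G)$ if and only if no $\gamma(G)$-set contains $u$ or $v$. The ``if'' direction is immediate, since a $\gamma(G)$-set containing $u$ or $v$ already dominates the subdivision vertex in $G_{uv}$. For ``only if'', given a $\gamma(G_{uv})$-set $D'$ with subdivision vertex $x$, either $x\notin D'$, in which case $u\in D'$ or $v\in D'$ is forced in order to dominate $x$, and $D'$ itself is a $\gamma(G)$-set meeting $\{u,v\}$; or $x\in D'$, and then $(D'-\{x\})\cup\{u\}$ is a dominating set of $G$ of cardinality $|D'|$ containing $u$. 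Consequently, $\mathrm{sd}(G)>1$ if and only if for every edge $uv\in E(G)$ some $\gamma(G)$-set meets $\{u,v\}$. Membership in NP follows immediately: a polynomial certificate is an edge-indexed family $\{D_{uv}\}_{uv\in E(G)}$ of subsets of $V(G)$, each of size $\gamma(G)$ (supplied with the instance), each dominating $G$, and each intersecting its edge; the verifier checks cardinality and domination in polynomial time.

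For NP-hardness I would reduce from \textsc{Dominating Set} on bipartite graphs, which is well known to be NP-complete. Given an instance $(H,k)$, the plan is to build in polynomial time a bipartite graph $G$, together with a closed-form value of $\gamma(G)$, such that $\mathrm{sd}(G)>1$ if and only if $\gamma(H)>k$. The construction would attach at each vertex of $H$ auxiliary pendant structures, using paths of even length to preserve bipartiteness, whose role is to pin down $\gamma(G)$ exactly and to force every $\gamma(G)$-set to restrict on $V(H)$ to a dominating set of $H$ of a prescribed cardinality tied to $k$. In addition, a central ``switch'' subgraph is added carrying a distinguished edge $e_{0}=u_{0}v_{0}$ whose endpoints lie in some $\gamma(G)$-set precisely when $H$ admits a dominating set of size at most $k$. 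Every other edge of $G$ is arranged so that it automatically has an endpoint in some $\gamma(G)$-set, regardless of $H$. Using the characterization from the first paragraph, $\mathrm{sd}(G)>1$ then holds if and only if $\gamma(H)>k$, completing the reduction.

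The main obstacle is designing the gadget so that all four requirements hold simultaneously: preservation of bipartiteness, a computable closed form for $\gamma(G)$ (needed so that the DSN instance can actually be produced in polynomial time), a tight correspondence between $\gamma(G)$-sets and dominating sets of $H$ of the prescribed size, and the isolation of a single edge whose ``avoided by every $\gamma(G)$-set'' status encodes $\gamma(H)\le k$ while every other edge is neutralised. The verification amounts to a careful case analysis of how a minimum dominating set of $G$ interacts with each pendant attachment and with the switch, and this is where the bulk of the technical work will lie. Once the gadget is in place, polynomiality of the reduction is clear and the bipartiteness of $G$ follows from the even parities chosen for the attached paths.
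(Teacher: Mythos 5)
There is a genuine gap, and it starts with your key lemma. The equivalence ``$\gamma(G_{uv})>\gamma(G)$ if and only if no $\gamma(G)$-set contains $u$ or $v$'' is false in the direction you dismiss as immediate. Take $G=K_{1,3}$ with centre $c$ and leaf $a$: the unique $\gamma(G)$-set $\{c\}$ contains an endpoint of the edge $ca$, yet subdividing $ca$ raises the domination number from $1$ to $2$ (so in fact $\mathrm{sd}(K_{1,3})=1$, while your criterion ``every edge meets some $\gamma(G)$-set'' would certify $\mathrm{sd}>1$). The error is that a $\gamma(G)$-set meeting $\{u,v\}$ dominates the new vertex $x$, but after the subdivision the edge $uv$ is gone, so the set need no longer dominate $u$ or $v$ themselves. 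Only the other direction of your equivalence is true. This invalidates your NP-membership certificate as stated; the fix is easy, though, and is what the paper does: certify $\mathrm{sd}(G)>1$ directly by exhibiting, for each edge $e$, a dominating set of $G_e$ of cardinality $\gamma(G)$.

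The hardness half has a second, independent gap: the reduction is never actually constructed. You reduce from \textsc{Dominating Set} on bipartite graphs (the paper reduces from 3-SAT), but the entire content of the theorem lives in the gadget you defer --- simultaneously pinning down $\gamma(G)$ in closed form, forcing $\gamma(G)$-sets to project onto dominating sets of $H$ of size tied to $k$, isolating one switch edge, and neutralising all others --- and you explicitly leave that design and its case analysis for later. Worse, your stated design target (``every other edge automatically has an endpoint in some $\gamma(G)$-set'') is calibrated to the false characterization above, so even a gadget meeting all four of your listed requirements would not yield ``$\mathrm{sd}(G)>1$ iff $\gamma(H)>k$''; neutralising an edge $uv$ requires the stronger property that some minimum dominating set survives the subdivision of $uv$, e.g.\ one containing both endpoints or containing one endpoint while the other retains a second dominator. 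As it stands the proposal proves neither membership nor hardness.
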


\begin{proof}
The proof is by a transformation from 3-SAT, which was proven to be \emph{NP}-complete in \cite{GJ79}. The problem 3-SAT is the problem of determining if there exists an interpretation that satisfies a given Boolean formula. The formula in 3-SAT is given in conjunctive normal form, where each clause contains three literals. We assume that the formula contains the instance of any literal $u$ and its negation $\neg u$ (in the other case all clauses containing the literal $u$ are satisfied by the true assignment of $u$). 

Given an instance, the set of literals $U=\{u_1,u_2,\ldots,u_n\}$ and the set of clauses $C=\{c_1,c_2,\ldots,c_m\}$ of 3-SAT, we construct the following graph $G$. For each literal $u_i$ construct a gadget $G_i$ on 6 vertices, where $u_i$ and $\neg u_i$ are the leaves (however $u_i$ and $\neg u_i$ not necessarily are to be leaves in $G$), see Fig.~\ref{f1}. 

\begin{figure}[h]
\begin{center}
\begin{tikzpicture}

\shade[ball color=black] (-1.6,1.5) circle (3pt) ;
\shade[ball color=black] (-1.6,3) circle (3pt)node[left] {$u_i$};
\shade[ball color=black] (0,0) circle (3pt);
\shade[ball color=black] (1.6,1.5) circle (3pt);
\shade[ball color=black] (1.6,3) circle (3pt) node[left] {$\neg{u_i}$} ;
\shade[ball color=black] (0,-1) circle (3pt);

\draw (-1.6,3) -- (-1.6,1.5) -- (0,0) -- (1.6,1.5) -- (1.6,3);
\draw (-1.6,1.5) -- (0,-1) -- (1.6,1.5);
\end{tikzpicture}
\end{center}
\caption{A gadged $G_i$}\label{f1}
\end{figure}
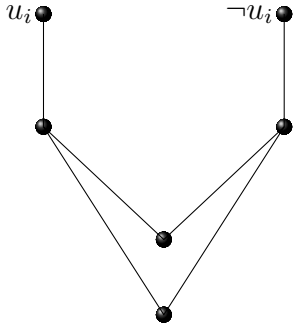

For each clause $c_j$ we have a clause vertex $c_j$, where vertex $c_j$ is adjacent to the literal vertices that correspond to the three literals it contains. For example, if $c_j=(u_1 \lor \neg u_2 \lor u_3 )$, then the clause vertex $c_j$ is adjacent to the literal vertices $u_1$, $\neg u_2$ and $u_3$. Then add new vertices $x_0,x_1$ in such a way that $x_1$ is adjacent to every clause vertex $c_j$ and to $x_0$. Hence $x_0$ is of degree one and $x_1$ is of degree $m+1$. Clearly we can see that $G$ is a bipartite graph and it can be build in polynomial time (see Fig.~\ref{f3}). 

\begin{figure}[h]
\begin{center}
\begin{tikzpicture}

\foreach \n in {0,1,2,3} {\shade[ball color=black] (\n*3.5-0.8,0.75) circle (3pt)  ;
						\shade[ball color=black] (\n*3.5-0.8,1.5) circle (3pt) node[left] {$u_{\n}$};
						\shade[ball color=black] (\n*3.5,0) circle (3pt);
						\shade[ball color=black] (\n*3.5,-0.5) circle (3pt);
						\shade[ball color=black] (\n*3.5+0.8,0.75) circle (3pt);
						\shade[ball color=black] (\n*3.5+0.8,1.5) circle (3pt) node[left] {$\neg{u_{\n}}$} ;
						\draw (\n*3.5-0.8,1.5) -- (\n*3.5-0.8,0.75) -- (\n*3.5,0) -- (\n*3.5+0.8,0.75) -- (\n*3.5+0.8,1.5);
						\draw(\n*3.5-0.8,0.75) -- (\n*3.5,-0.5) -- (\n*3.5+0.8,0.75) ;
						};

\foreach \n in {0,1,2,3} {\shade[ball color=black] (\n*3.7,5) circle (3pt) node[left] {$c_{\n}$};
						\draw (\n*3.7,5) -- (5.55,7);}

\shade[ball color=black] (5.55,7) circle (3pt) node[right] {$x_1$};
\shade[ball color=black] (5.55,8) circle (3pt) node[right] {$x_0$};
\draw (5.55,7) -- (5.55,8);

\foreach \n in {0,1,2}{\draw (0,5)--(\n*3.5-0.8,1.5);};
\foreach \n in {1,2,3}{\draw (11.1,5)--(\n*3.5+0.8,1.5);};

\draw (0.8,1.5) -- (3.7,5) -- (2.7,1.5);
\draw (3.7,5) -- (6.2,1.5);
\draw (7.4,5) -- (4.3,1.5) ;
\draw (7.8,1.5) -- (7.4,5)--(9.7,1.5);
\end{tikzpicture}
\end{center}
\caption{A construction of $G$ for  $(u_0\lor u_1\lor u_2)\land (\neg u_0\lor u_1\lor u_2) \land(\neg u_1\lor \neg u_2\lor u_3) \land (\neg u_1\lor \neg u_2 \lor \neg u_3)$}\label{f3}
\end{figure}

First observe that at least two vertices from each gadget $G_i$ and either $x_1$ or $x_0$ must be contained in any minimum dominating set of $G$. Thus, $\gamma(G)\geq 2n+1$. On the other hand, is possible to construct o dominating set of $G$ of cardinality $2n+1$. Therefore, $\gamma(G)=2n+1$.

Denote by $G_1, G_2,\dots, G_{m(G)}$ the graph obtained from $G$ by subdividing once edge $e_1,e_2,\dots, e_{m(G)}$, respectively.
For a given graph $G$ and its domination number $\gamma(G)$ it is possible to verify a certificate for the DSN problem, which are dominating sets of cardinality $\gamma(G)$ in $G_1, G_2,\dots, G_{m(G)}$, in polynomial time.

Assume first $C$ has a satisfying truth assignment. If we subdivide any edge belonging to a gadged $G_i$, then we may construct a minimum dominating set of the resulting graph by adding to it two vertices from each gadged $G_i$ and additionally $x_1$. The situation is similar if we subdivide any edge incident with a clause vertex. Now let $x$ be the new vertex obtained by subdivision the edge $x_0x_1$ in $G$ and denote by $G_x$ the obtained graph. Since $C$ has a satisfying truth assignment, the minimum dominating set of $G_x$ is constructed by taking the vertices defined by the truth assignment together with one more vertex from each gadged $G_i$ and together with $x$. Therefore we conclude that $sd(G)>1$.

Assume now $C$ does not have a satisfying truth assignment. Then subdivide the edge $x_0x_1$ to obtain the graph $G_x$. The minimum dominating set of $G_x$ must contain at least two vertices from each gadged $G_i$ and additionally $x$. However, since $C$ does not have a satisfying truth assignment, no subset of $2n$ vertices of $G_1\cup G_2\cup\dots\cup G_n$ can dominate each gadged vertex and each cause vertex. Therefore, $sd(G)=1$.
\end{proof}

The decision problem of domination multisubdivision problem may be stated similarly:\\
DOMINATION MULTISUBDIVISION NUMBER (DMN)\\
INSTANCE: Graph $G=(V,E)$ and the domination number $\gamma(G)$.\\
QUESTION: Is msd$(G)>1?$

Let us observe, that
\begin{obs}\label{o1}
Let $G$ be a graph. Then
\[
sd(G)=1 \textrm{ if and only if } msd(G)=1.
\]
\end{obs}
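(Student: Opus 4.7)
The statement is essentially a definitional unwinding, so the plan is to chase both implications directly from the definitions of $sd(G)$, $msd(uv)$, and $msd(G)$.

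For the forward direction, assume $sd(G)=1$. Then there exists an edge $e=uv\in E(G)$ such that subdividing $e$ once already yields $\gamma(G_{e})>\gamma(G)$. In the language of the multisubdivision number, this says that $msd(uv)\le 1$. Since $msd(uv)$ is defined as a positive integer, $msd(uv)=1$, and therefore $msd(G)=\min\{msd(xy):xy\in E(G)\}\le 1$; again by positivity, $msd(G)=1$.

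For the reverse direction, assume $msd(G)=1$. Then there is an edge $uv\in E(G)$ with $msd(uv)=1$, which by definition of $msd(uv)$ means that subdividing $uv$ exactly once (i.e.\ passing to $G_{uv}$) strictly increases the domination number. Hence a single edge subdivision suffices to raise $\gamma$, so $sd(G)\le 1$; since $sd(G)\ge 1$ always, we conclude $sd(G)=1$.

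There is no real obstacle here: both quantities are defined so that the value $1$ certifies the very same phenomenon, namely the existence of an edge whose single subdivision raises $\gamma(G)$. The only minor point worth stating explicitly is that $msd(uv)\ge 1$ and $sd(G)\ge 1$ by definition, which is what lets the inequalities ``$\le 1$'' in each direction be upgraded to equalities.
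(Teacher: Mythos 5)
Your proof is correct and is exactly the definitional unwinding the paper has in mind: the paper states this observation without proof, treating it as immediate from the definitions of $sd$, $msd(uv)$, and $msd(G)$. Your write-up simply makes that explicit, including the minor but worthwhile point that both quantities are at least $1$ by definition.
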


This observation implies that the following result one may prove in similar manner as Theorem~\ref{thm:NPC}.

\begin{thm}\label{thm:NPC2}
\emph{DOMINATION MULTISUBDIVISION NUMBER} is \emph{NP}-complete even for bipartite graphs.  \hspace*{\fill} \rule{2mm}{2mm} \par \hspace{0.1mm}
\end{thm}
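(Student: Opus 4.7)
The plan is to recycle the entire machinery of Theorem~\ref{thm:NPC} and use Observation~\ref{o1} as the translation device, since that observation says exactly that the two decision problems DSN and DMN agree on every input.

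First I would handle membership in NP. Just as in Theorem~\ref{thm:NPC}, a YES-certificate for DMN (that is, a certificate that msd$(G)>1$) consists, for every edge $e\in E(G)$, of a dominating set of cardinality $\gamma(G)$ in $G_e$. By Observation~\ref{o1}, msd$(G)>1$ is equivalent to sd$(G)>1$, so such a collection of sets exists precisely when the answer to DMN is YES; and verifying that each of these $m(G)$ sets is indeed dominating in the corresponding $G_e$ and has the right cardinality takes polynomial time. Hence DMN $\in$ NP.

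For NP-hardness, I would perform the same polynomial-time reduction from 3-SAT that is constructed in the proof of Theorem~\ref{thm:NPC}: from the instance $(U,C)$ build the bipartite graph $G$ together with its domination number $\gamma(G)=2n+1$ as described there. That proof establishes that $C$ is satisfiable if and only if sd$(G)>1$. Applying Observation~\ref{o1} then gives $C$ is satisfiable if and only if msd$(G)>1$, which is precisely the condition asked by the DMN instance $(G,\gamma(G))$. Since the construction is polynomial-time and $G$ is bipartite, this shows DMN is NP-hard even when restricted to bipartite inputs.

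Honestly, there is no real obstacle here; the only thing to take care of is to be explicit that Observation~\ref{o1} transports both directions of the 3-SAT correspondence simultaneously (satisfiability $\Leftrightarrow$ sd$(G)>1$ $\Leftrightarrow$ msd$(G)>1$), so that the very same graph $G$ and the very same parameter $\gamma(G)$ serve as the DMN instance without any modification. Combining these two paragraphs yields the desired NP-completeness of DMN for bipartite graphs.
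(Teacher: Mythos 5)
Your proposal is correct and matches the paper's intent exactly: the paper gives no separate proof of this theorem, stating only that it follows from Observation~\ref{o1} in the same manner as Theorem~\ref{thm:NPC}, which is precisely the translation you carry out. Nothing further is needed.
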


\section{Results and bounds for domination multisubdivision number}
Determining the domination multisubdivision number is hard even for bipartite graphs, so it is good motivation to study this parameter and give some general bounds and properties. Here we start with some basic properties of multisubdivision numbers.

From Observation~\ref{o1} and properties of graphs in which the subdivision number is one, follow the next two observations.
\begin{obs}\label{wn1}
If a graph $G$ has a strong support vertex, then 
\[
sd(G)=msd(G)=1.
\]
\end{obs}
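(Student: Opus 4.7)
The plan is to use Observation~\ref{o1} to reduce the claim to proving sd$(G)=1$, i.e., to exhibit a single edge whose subdivision strictly raises $\gamma$. Let $v$ be a strong support vertex with (at least two) leaf-neighbours $\ell_1,\ldots,\ell_k$, $k\geq 2$. I will subdivide the edge $v\ell_1$ with a new vertex $x$ to form $G':=G_{v\ell_1}$, and argue that $\gamma(G')\geq \gamma(G)+1$ by transforming any $\gamma(G')$-set into a strictly smaller dominating set of $G$.

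Take any minimum dominating set $D'$ of $G'$. Note that $\ell_1$ is a leaf of $G'$ with unique neighbour $x$, and each $\ell_j$ ($j\geq 2$) is a leaf of $G'$ with unique neighbour $v$; hence $D'$ must meet $\{\ell_1,x\}$ and must meet $\{v,\ell_j\}$ for every $j\geq 2$. I then split on whether $v\in D'$. If $v\in D'$, then at least one of $\ell_1,x$ lies in $D'$, and deleting that single vertex from $D'$ leaves a dominating set of $G$ of size $|D'|-1$, because the deleted vertex dominates only elements of $\{v,\ell_1,x\}$ in $G'$ and $v$ still dominates $\ell_1$ in $G$. If $v\notin D'$, then $\ell_j\in D'$ for every $j\geq 2$ and at least one of $\ell_1,x$ belongs to $D'$, so at least two elements of $\{\ell_1,\ldots,\ell_k,x\}$ sit in $D'$; replacing all of them by $v$ yields a set of size at most $|D'|-1$ that dominates $G$, since every vertex they dominate in $G'$ lies in $\{v,\ell_1,\ldots,\ell_k,x\}$ and is dominated by $v$ in $G$.

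Either case contradicts $\gamma(G')=\gamma(G)$, so $\gamma(G')\geq \gamma(G)+1$, giving sd$(G)=1$; Observation~\ref{o1} then yields msd$(G)=1$. The one place requiring care is the verification in the second case that no dominating coverage is lost when several leaves (and possibly $x$) are swapped for the single vertex $v$; this reduces to the simple observation that every $G'$-neighbour of any of those vertices already lies in $\{v,\ell_1,\ldots,\ell_k,x\}$, so $v\in D$ covers everything they covered.
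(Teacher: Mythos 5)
Your argument is correct, but note that the paper offers no proof of this observation at all: it simply asserts that it ``follows from Observation~\ref{o1} and properties of graphs in which the subdivision number is one,'' i.e.\ it leans on the known fact (from the earlier literature on sd) that a strong support vertex forces sd$(G)=1$. What you supply is a direct, self-contained verification of that fact: subdivide one pendant edge $v\ell_1$ at the strong support vertex and show $\gamma(G_{v\ell_1})\geq\gamma(G)+1$ by converting any $\gamma(G_{v\ell_1})$-set into a strictly smaller dominating set of $G$. Your case analysis on whether $v\in D'$ is sound, and the key closed-neighbourhood containments ($N_{G'}[\ell_j]$, $N_{G'}[x]\subseteq\{v,\ell_1,\dots,\ell_k,x\}$) do exactly the work you claim. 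The one spot to tighten is Case~1: you say ``at least one of $\ell_1,x$ lies in $D'$, and deleting that single vertex'' yields a dominating set of $G$, but if \emph{both} $\ell_1$ and $x$ were in $D'$ and you deleted only $\ell_1$, the remaining set would still contain $x\notin V(G)$. This is harmless — minimality of $D'$ forces at most one of $\{\ell_1,x\}$ to lie in $D'$ once $v\in D'$ (otherwise $D'-\{\ell_1\}$ would still dominate $G'$) — but that sentence should be added. With that line inserted, your proof is a complete and arguably preferable replacement for the paper's bare citation, since it also directly exhibits msd$(v\ell_1)=1$ without needing Observation~\ref{o1} at all.
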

\begin{obs}
For a complete graph $K_n$ and a wheel $W_n$, $n\geq 3$, we have
\[msd(K_n)=sd(K_n)=msd(W_n)=sd(W_n)=1.\]
\end{obs}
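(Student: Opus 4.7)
The plan is straightforward. By Observation~\ref{o1} it suffices to show $\text{sd}(K_n)=\text{sd}(W_n)=1$, since then $\text{msd}(K_n)=\text{msd}(W_n)=1$ will follow for free. Because $\gamma(K_n)=\gamma(W_n)=1$ (the vertex $x_1$ of $K_n$, respectively the hub of $W_n$, dominates the whole graph), the task reduces to exhibiting, in each case, a single edge whose subdivision destroys every one-vertex dominating set.

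For $K_n$, I would subdivide any edge $e=uv$ and call the new vertex $x$. Since in $(K_n)_e$ the only neighbours of $x$ are $u$ and $v$, any one-vertex dominating set must be $\{u\}$, $\{v\}$ or $\{x\}$. The first two fail because the edge $uv$ has been removed, so $u$ does not dominate $v$ and vice versa; the third fails because $x$ has only two neighbours while $n\geq 3$. Hence $\gamma((K_n)_e)\geq 2>\gamma(K_n)$, giving $\text{sd}(K_n)=1$.

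For $W_n$ (hub $h$, rim $v_1,\ldots,v_n$), I would split the analysis by the type of subdivided edge, again letting $x$ denote the new vertex. If the subdivided edge is a spoke $hv_i$, then the only possible one-vertex dominators are $x$, $h$ or $v_i$: $x$ misses all rim vertices other than $v_i$; $h$ misses $v_i$ (the spoke is gone); $v_i$ misses $h$, and for $n\geq 4$ it also misses $v_{i+2}$. If the subdivided edge is a rim edge $v_iv_{i+1}$, then the candidates are $x$, $v_i$ or $v_{i+1}$: $x$ misses $h$, while each of $v_i,v_{i+1}$ fails to dominate the other (the edge was removed) and, for $n\geq 4$, fails to reach a far rim vertex. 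In every case no single vertex dominates, so $\gamma\bigl((W_n)_e\bigr)\geq 2$ and $\text{sd}(W_n)=1$.

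The only mild obstacle is keeping the wheel case analysis uniform across small $n$; I would handle $n=3$ (where the subdivided $K_4$ becomes, in effect, $K_4$ minus an edge with a subdivided pendant structure) by inspecting the short list of candidate dominators directly. Once both computations are done, Observation~\ref{o1} closes the argument.
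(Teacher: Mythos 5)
Your proof is correct and follows the same route the paper intends: reduce to showing $\mathrm{sd}(K_n)=\mathrm{sd}(W_n)=1$ and then invoke Observation~\ref{o1} to get the multisubdivision statement. The paper states this observation without proof (presenting it as a consequence of Observation~\ref{o1} and known properties of graphs with subdivision number one), so your direct enumeration of the candidate one-vertex dominating sets after subdividing an edge simply supplies the routine verification the paper omits, and it checks out in every case, including $n=3$.
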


Since any cycle (any path) with subdivided edge $k$ times is isomorphic to the cycle (the path) with subdivided $k$ edges once, we immediately obtain the observation.

\begin{obs}
For a cycle $C_n$ and a path $P_n$, $n\geq 3$, we have
\begin{displaymath}
msd(C_n)=sd(C_n)=\left\{ \begin{array}{ll}
1 & \textrm{if $n\equiv 0\pmod 3$}\\
2 & \textrm{if $n\equiv 2\pmod 3$}\\
3 & \textrm{if $n\equiv 1\pmod 3,$}
\end{array} \right.
\end{displaymath}
\begin{displaymath}
msd(P_n)=sd(P_n)=\left\{ \begin{array}{ll}
1 & \textrm{if $n\equiv 0\pmod 3$}\\
2 & \textrm{if $n\equiv 2\pmod 3$}\\
3 & \textrm{if $n\equiv 1\pmod 3.$}
\end{array} \right.
\end{displaymath}
\end{obs}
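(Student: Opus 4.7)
The plan is to exploit the isomorphism remark stated immediately before the observation: subdividing one edge of $C_n$ (respectively $P_n$) exactly $k$ times produces a graph isomorphic to $C_{n+k}$ (respectively $P_{n+k}$), and subdividing any $k$ distinct edges of $C_n$ (respectively $P_n$) once each produces the same graph $C_{n+k}$ (respectively $P_{n+k}$). So both parameters reduce to the same combinatorial question: what is the smallest $k\geq 1$ such that $\gamma(C_{n+k})>\gamma(C_n)$ (resp.\ $\gamma(P_{n+k})>\gamma(P_n)$)?

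First I would formally record the equality $sd(G)=msd(G)$ for $G\in\{C_n,P_n\}$. If $sd(G)=k$, some choice of $k$ edges witnesses the $\gamma$-increase, but the resulting graph is $C_{n+k}$ (resp.\ $P_{n+k}$); the same graph arises from subdividing a single edge $k$ times, so $msd(G)\leq k$. Conversely, from an edge realising $msd(G)=k$ we obtain $C_{n+k}$ (resp.\ $P_{n+k}$) again, which is also the outcome of subdividing any $k$ edges once, giving $sd(G)\leq k$. Hence the two parameters coincide.

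Second, I would invoke the classical formulas $\gamma(C_n)=\gamma(P_n)=\lceil n/3\rceil$, reducing the problem to determining the minimum $k\geq 1$ with $\lceil(n+k)/3\rceil>\lceil n/3\rceil$. A short case analysis on $n \bmod 3$ finishes the job: if $n\equiv 0\pmod 3$, then $k=1$ already works because $\lceil(n+1)/3\rceil=n/3+1$; if $n\equiv 2\pmod 3$, then $k=1$ keeps the ceiling at $(n+1)/3$ while $k=2$ raises it to $(n+4)/3$; if $n\equiv 1\pmod 3$, the ceiling stays equal to $(n+2)/3$ for $k=1,2$ and jumps to $(n+5)/3$ at $k=3$. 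This matches the three cases of the observation.

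No real obstacle is expected; the only care needed is the ceiling-arithmetic in the modular case analysis, and the verification that the isomorphism principle applies uniformly to both paths and cycles (which it does because every edge of $C_n$ lies on the cycle and every edge of $P_n$ lies on the single path, so the multiset of edge lengths obtained after subdivision depends only on the total number of inserted vertices).
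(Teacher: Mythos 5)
Your proposal is correct and follows exactly the route the paper intends: the isomorphism remark preceding the observation (subdividing one edge $k$ times versus $k$ edges once both yield $C_{n+k}$ or $P_{n+k}$) gives $sd=msd$, and the formula $\gamma=\lceil n/3\rceil$ plus the modular case analysis gives the stated values. The paper treats this as immediate and gives no further detail; your ceiling arithmetic is a correct filling-in of that detail.
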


\begin{thm}\label{obs1}
For a connected graph $G$, 
\[
1\leq msd(G)\leq 3.
\]
\end{thm}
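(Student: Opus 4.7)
\smallskip\noindent\textbf{Proof sketch.} The bound $\mathrm{msd}(G)\geq 1$ is immediate from the definition. For the upper bound the plan is to prove the stronger statement that for \emph{any} edge $e=uv$ of $G$ one has $\gamma(G_{e,3})>\gamma(G)$; this yields $\mathrm{msd}(uv)\leq 3$ and therefore $\mathrm{msd}(G)\leq 3$. Write the inserted path as $u,x_1,x_2,x_3,v$, assume toward a contradiction that $\gamma(G_{e,3})=\gamma(G)$, and fix a $\gamma$-set $D$ of $G_{e,3}$. Two elementary observations drive the argument: (a) since only the edge $uv$ is affected by the subdivision, $N_{G_{e,3}}[w]=N_G[w]$ for every $w\in V(G)-\{u,v\}$, and $x_1,x_2,x_3\notin N_G[w]$; (b) $u$ and $v$ are adjacent in $G$ (but not in $G_{e,3}$), so either of them, if placed in a set, dominates the other in $G$.

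I would then run a short case analysis on $A:=D\cap\{x_1,x_2,x_3\}$, in each case building a dominating set of $G$ of size strictly less than $\gamma(G)$. The case $A=\emptyset$ is ruled out because $x_2$ has no neighbor in $D$. When $A=\{x_2\}$, observation (a) forces $D\cap V(G)$ to dominate all of $V(G)-\{u,v\}$, and since $x_1,x_3\notin D$ the dominators of $u$ and $v$ in $G_{e,3}$ must already lie in $V(G)$; so $D\cap V(G)$ dominates $G$ with only $\gamma(G)-1$ vertices, contradicting minimality. When $|A|\in\{2,3\}$, the set $(D\setminus\{x_1,x_2,x_3\})\cup\{u\}$ dominates $G$ by (a) and (b) and has size at most $\gamma(G)-|A|+1\leq\gamma(G)-1$, again a contradiction.

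The delicate case — which I expect to be the main obstacle — is $A=\{x_1\}$ (and symmetrically $A=\{x_3\}$): naively $D\cap V(G)$ already has the desired size $\gamma(G)-1$, but $u$ might be left undominated, since its only dominator in $G_{e,3}$ could be $x_1$. The rescue is a forced-placement observation: to dominate $x_3$ in $G_{e,3}$ one of $x_2,x_3,v$ must lie in $D$, and the first two do not, so $v\in D$. Then $D\setminus\{x_1\}$ contains $v$, which by (b) dominates $u$ in $G$, and it dominates every other vertex by (a); its cardinality is $\gamma(G)-1$, the sought contradiction. Once this sub-case is settled, the remaining possibilities reduce to the same transfer-to-$G$ idea, completing the bound $\mathrm{msd}(G)\leq 3$.
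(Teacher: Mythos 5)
Your proposal is correct and follows essentially the same strategy as the paper: take a minimum dominating set $D$ of $G_{e,3}$, analyze $D\cap\{x_1,x_2,x_3\}$ case by case, and in each case transfer $D$ back to a dominating set of $G$ of strictly smaller size (including the same key forced-placement step that $x_1\in D$ forces $v\in D$ via the domination of $x_3$). The only cosmetic difference is that the paper first reduces the $|D\cap\{x_1,x_2,x_3\}|=2$ case to the singleton case by an exchange argument, whereas you handle $|A|\geq 2$ directly by substituting $u$ for all of $A$; both work.
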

\begin{proof}
Let $uv$ be an edge of a graph $G$. Since $\gamma(G_{uv})\geq \gamma (G)$, we have \mbox{msd$(G)\geq 1$.} Now, let us subdivide an edge $uv$ with $3$ vertices $x$, $y$ and $z$ (we replace the edge $uv$ with the path $(u,x,y,z,v)$), and let $D$ be a $\gamma (G_{uv,3})$-set. Since $D$ is a minimum dominating set, it is easy to observe that $1\leq |D\cap \{x,y,z\}|\leq 2$. It is again easy to observe that if $|D\cap \{x,y,z\}|=2$, then we can exchange one vertex from $D\cap \{x,y,z\}$ with $u$ or $v$ to obtain minimum dominating set of $G_{uv,3}$ such that $|D\cap \{x,y,z\}|=1$. Thus, if $x\in D$, then $v$ belongs to $D$ to dominate $z$ and $D-\{x\}$ is a $\gamma (G)$-set. Similarly, if $z\in D$, then $u\in D$ and $D-\{z\}$ a $\gamma (G)$-set. If $y\in D$, then obviously $D-\{y\}$ is a $\gamma (G)$-set. In all the cases we can find a smaller dominating set in $G$ then in $G_{uv,3}$, it implies that msd$(G)\leq 3$.
\end{proof}

\begin{prop}
For a complete bipartite graph $K_{p,q}$, $p\leq q$, we have
\begin{displaymath}
msd(K_{p,q})=\left\{ \begin{array}{ll}
1 & \textrm{if $p=1$ and $q>1$,}\\
2 & \textrm{if $p=q=1$,}\\
3 & \textrm{if $p\geq 2.$}
\end{array} \right.
\end{displaymath}
\end{prop}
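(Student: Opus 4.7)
The plan is to dispatch the three cases separately, relying on the simple structure of $K_{p,q}$.

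The case $p=1$, $q>1$ is immediate: the unique non-leaf vertex of $K_{1,q}$ is adjacent to $q\ge 2$ leaves, hence is a strong support vertex, so Observation~\ref{wn1} gives $msd(K_{1,q})=1$. The case $p=q=1$ (i.e.\ $K_2$) is also a direct check: subdividing the single edge once yields $P_3$ with $\gamma=1=\gamma(K_2)$, whereas subdividing it twice yields $P_4$ with $\gamma(P_4)=2$, so $msd(K_2)=2$.

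For the main case $p\ge 2$ (so also $q\ge 2$), first note $\gamma(K_{p,q})=2$, realized by picking one vertex on each side. Edge-transitivity of $K_{p,q}$ lets me fix a single edge $uv$ (with $u$ on the $p$-side, $v$ on the $q$-side) and work with it; then $msd(K_{p,q})=\mathrm{msd}(uv)$. I will verify that the pair $\{u,v\}$ remains a dominating set after $1$ or $2$ subdivisions of $uv$: in $(K_{p,q})_{uv}$, with subdivision vertex $x$, the vertex $u$ dominates itself, $x$, and the right side minus $v$, while $v$ dominates itself, $x$, and the left side minus $u$; in $(K_{p,q})_{uv,2}$, with path $u,x,y,v$, the same set $\{u,v\}$ still covers $x$ and $y$ (each adjacent to an endpoint). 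Hence $\gamma$ is unchanged after one or two subdivisions, so $\mathrm{msd}(uv)\ge 3$.

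The key step is to show that three subdivisions of $uv$ force $\gamma$ to strictly increase, i.e., $\gamma((K_{p,q})_{uv,3})\ge 3$. Label the inserted path $u,x,y,z,v$. I will argue by contradiction: suppose $D$ is a dominating set with $|D|=2$, and analyze which of $\{x,y,z\}$ lie in $D$. The middle vertex $y$ satisfies $N(y)=\{x,z\}$, so $D\cap\{x,y,z\}\ne\emptyset$. If $y\in D$, then dominating the remaining left-side vertex (say any element of $L\setminus\{u\}$, which is nonempty because $p\ge 2$) requires a vertex from $R\cup(L\setminus\{u\})$ in $D$, and dominating a remaining right-side vertex (nonempty because $q\ge 2$) requires a vertex from $L\cup(R\setminus\{v\})$; no single vertex can do both, a contradiction. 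If $x\in D$ but $y\notin D$, then $z$ must be dominated by $v$, forcing $D=\{x,v\}$; but then $R\setminus\{v\}$ is not dominated (its neighborhood lies in $L$, which $D$ avoids). The case $z\in D$, $y\notin D$ is symmetric. All cases fail, giving $\gamma\ge 3$ and $\mathrm{msd}(uv)\le 3$. Combined with Theorem~\ref{obs1}, this yields $msd(K_{p,q})=3$.

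The main obstacle is the last case-analysis; everything else is bookkeeping. The role of the assumption $p\ge 2$ (and hence $q\ge 2$) is exactly to guarantee that $L\setminus\{u\}$ and $R\setminus\{v\}$ are nonempty, which is what prevents any two-vertex dominating set in the three-times-subdivided graph.
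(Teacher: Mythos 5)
Your proof follows essentially the same route as the paper's: for $p\ge 2$ the paper likewise observes that $\{u,v\}$ remains a minimum dominating set of $K_{p,q}$ after the edge $uv$ is subdivided twice, concludes $msd(K_{p,q})>2$, and then invokes Theorem~\ref{obs1} to get $msd(K_{p,q})=3$; the $p=1$ cases are dismissed there as obvious, and your explicit treatment of them via Observation~\ref{wn1} and the $K_2\mapsto P_3\mapsto P_4$ check is fine. The only substantive difference is your added direct verification that $\gamma\bigl((K_{p,q})_{uv,3}\bigr)\ge 3$. That paragraph is logically redundant, since Theorem~\ref{obs1} already supplies the upper bound $msd\le 3$ (as you yourself note at the end), and as written it contains a slip: in the subcase $y\in D$, the two sets you name, $R\cup(L\setminus\{u\})$ and $L\cup(R\setminus\{v\})$, are not disjoint, and indeed a single vertex $\ell\in L\setminus\{u\}$ dominates both $\ell$ and all of $R$, so ``no single vertex can do both'' does not follow. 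The genuine obstruction is that $N[u]$ and $N[v]$ are disjoint in the subdivided graph and neither contains $y$, so no second vertex can dominate both $u$ and $v$; relatedly, the possibility $D=\{x,z\}$ is not cleanly covered by your split into ``$x\in D$, $y\notin D$'' and its mirror. Since that whole paragraph can simply be deleted in favour of Theorem~\ref{obs1}, the proof as a whole is correct and matches the paper's.
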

\begin{proof}
The result is obvious for $p=1$. Thus, we assume that $2\leq p\leq q$.

Let $uv$ be any edge of $K_{p,q}$. Then $\{u,v\}$ is a minimum dominating set of the graph $K_{p,q}$ and of the graph $K_{p,q}$ with the edge $uv$ subdivided two times. This implies that msd$(K_{p,q})>2$ and therefore, by Theorem~\ref{obs1}, msd$(G)=3$.
\end{proof}

Although the multisubdivision number of a graph is bounded from above by~3, it was proven by Favaron, Karami and Sheikholeslami~{\cite{fav2}} that the subdivision number can be arbitrary large: {\it For each pair of positive integers r and q such that $r+q\geq 4$, there exists a graph G with $\delta(G)=r$ and sd$(G)\geq r+q$.} Hence, the difference between sd$(G)$ and msd$(G)$ also cannot be bounded from above by any integer in general case. Although the multisubdivision number is always not greater than three and the subdivision number cannot be bounded from above by any integer, the inequality msd$(G)\leq$ sd$(G)$ is not true, since msd$(K_{p,q})=3$ and sd$(K_{p,q})=2$ for $3\leq p\leq q$. Thus, the subdivision number and the multisubdivision number are incomparable in general case. In the next section we show that for trees these two domination parameters are the same.

\section{Domination multisubdivision number of a tree}
Now we consider multisubdivision numbers for trees. The main result of this section is what follows.
\begin{thm}\label{tg}
Let $T$ be a tree with $n\geq 3$. Then
\[
sd(T)=msd(T).
\]
\end{thm}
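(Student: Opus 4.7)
The plan is to combine three facts: the trichotomy $1\leq msd(T)\leq 3$ from Theorem~\ref{obs1}, Velammal's bound $sd(T)\leq 3$ for trees from \cite{vel}, and the equivalence $sd(T)=1\iff msd(T)=1$ supplied by Observation~\ref{o1}. Together these reduce the theorem to a single equivalence at the middle level, namely $sd(T)\leq 2\iff msd(T)\leq 2$: the case $sd(T)=msd(T)=1$ is already handled, and once the equivalence at level~$2$ is proved, the remaining case $sd(T)=msd(T)=3$ follows by elimination since both parameters are confined to $\{1,2,3\}$.

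For the direction $msd(T)\leq 2\Rightarrow sd(T)\leq 2$, I would pick an edge $uv\in E(T)$ with $msd(uv)\leq 2$. The subcase $msd(uv)=1$ is handled by Observation~\ref{o1}, so assume $\gamma(T_{uv})=\gamma(T)$ while $\gamma(T_{uv,2})>\gamma(T)$. Writing $T_{uv,2}$ as the path $(u,x,y,v)$ and applying the exchange/removal normalisation used in the proof of Theorem~\ref{obs1} to a $\gamma(T_{uv,2})$-set, I would extract a rigidity condition on every $\gamma(T)$-set $D$, expressed through the private neighbourhoods of $u$ and $v$ with respect to $D$. From this rigidity I would exhibit a second edge $e'\in E(T)$ incident with $u$ or with $v$ such that subdividing $uv$ and $e'$ once each simultaneously forces $\gamma$ to increase, yielding $sd(T)\leq 2$.

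For the contrapositive of the other direction, $msd(T)=3\Rightarrow sd(T)=3$, I would take two arbitrary edges $e_1,e_2\in E(T)$ and, using the hypothesis $\gamma(T_{e_i,2})=\gamma(T)$ for $i=1,2$, splice minimum dominating sets of $T_{e_1,2}$ and $T_{e_2,2}$ along the unique path joining $e_1$ and $e_2$ in $T$ to produce a dominating set of size $\gamma(T)$ in the tree obtained from $T$ by subdividing both $e_1$ and $e_2$ once each. Because every edge of a tree is a bridge, the splicing introduces no dominating conflicts away from that path, and the tree decomposes cleanly into subtrees whose $\gamma$-sets combine additively. The main obstacle I expect is the local case analysis on the endpoints of $e_1$ and $e_2$ (leaves, support vertices, strong support vertices, or internal vertices) together with the residue modulo~$3$ of the distance between $e_1$ and $e_2$; the three-periodicity visible in the earlier path and cycle formulas strongly suggests that the argument naturally decomposes according to this residue.
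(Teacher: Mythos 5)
Your opening reduction is correct and is exactly the paper's: with $sd(T),msd(T)\in\{1,2,3\}$ (Velammal's bound plus Theorem~\ref{obs1}) and the level-one equivalence of Observation~\ref{o1}, the theorem collapses to the single equivalence $sd(T)=3\iff msd(T)=3$. But from that point on your text is a plan rather than a proof: neither of the two remaining implications is actually argued, and the places where the real work lives are precisely the places you defer. The paper proves both implications by leaning on the constructive characterization of the family $\mathcal{F}$ of trees with $sd(T)=3$ (Theorem~\ref{tw2}), in particular on the $A/B$ labelling and the fact that $A(T)$ is a $\gamma(T)$-set; your proposal invokes no structural information about $T$ at all, and it is not clear that either direction can be closed without some such input.

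Concretely: for $msd(T)\leq 2\Rightarrow sd(T)\leq 2$ you promise to ``extract a rigidity condition'' from a $\gamma(T_{uv,2})$-set and then ``exhibit a second edge $e'$'' whose subdivision together with that of $uv$ increases $\gamma$ --- but you never say what the rigidity condition is, how $e'$ is chosen, or why a suitable $e'$ must exist; the normalisation from Theorem~\ref{obs1} only tells you that a $\gamma(T_{uv,2})$-set can be assumed to meet $\{x,y\}$ in one vertex, which by itself yields nothing about a second edge. (The paper instead proves the contrapositive, Lemma~\ref{t1}: from $T\in\mathcal{F}$ it builds, for \emph{every} edge $uv$, an explicit dominating set of $T_{uv,2}$ of size $|A(T)|$ by shifting the vertices of $A(T)$ toward the subdivision vertices.) For $msd(T)=3\Rightarrow sd(T)=3$ your splicing argument rests on the assertion that the subtrees obtained by cutting $T$ at $e_1$ and $e_2$ have $\gamma$-sets that ``combine additively''; that is false in general --- $\gamma$ is only subadditive over bridge cuts (cut $P_4$ at its middle edge: $\gamma(P_4)=2=\gamma(P_2)+\gamma(P_2)$, but cut $P_2$ at its only edge and $\gamma$ jumps from $1$ to $2$) --- and a $\gamma(T_{e_1,2})$-set and a $\gamma(T_{e_2,2})$-set need not agree, or even be reconcilable at the claimed cost, on the middle component. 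The mod-$3$ heuristic imported from paths also breaks at branch vertices, so the ``local case analysis'' you flag as the main obstacle is in fact the entire content of this direction. The paper handles it quite differently (Lemma~\ref{t2}): induction on $n$ along a longest path, deleting a pendant $P_2$ or $P_3$, verifying $msd(T')=3$ for the smaller tree, and reassembling $T$ from $T'\in\mathcal{F}$ via the operations $\mathcal{T}_1$, $\mathcal{T}_2$. As it stands, your proposal has a genuine gap in both directions.
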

Thus, in trees it does not matter if we subdivide a set of edges, each edge once, or if we multi-subdivide only one edge. In both cases the minimum number of subdivision vertices needed to increase the domination number is the same for a tree.

It has been shown by Velammal \cite{vel} that the domination subdivision number of a tree is either 1, 2 or 3. The classes of trees $T$ with sd$(T)=1$ and sd$(T)=3$ are characterized (see \cite{fav1}, \cite{myn}). Thus by Observation~\ref{o1}, in order to prove Theorem~\ref{tg} it suffices to show that for a tree $T$ with at least~3 vertices we have $$sd(T)=3 \textrm{ if and only if } msd(T)=3.$$

\subsection{Trees with the domination multisubdivision number equal to 3}
The following constructive characterization of the family $\mathcal{F}$ of labeled trees $T$ with sd$(T)=3$ was given by Aram, Sheikholeslami and Favaron~\cite{fav1}. The label of a vertex $v$ is also called a status of $v$ and is denoted by sta$(v)$.
 Let $\mathcal{F}$ be the family of labelled trees such that:
 \begin{itemize}
\item contains $P_4$ where the two leaves have status $A$ and the two support vertices have status $B$; and
\item is closed under the two operations  $\mathcal{T}_1$ and  $\mathcal{T}_2$, which extend the tree $T$ by attaching a path to a vertex $v \in V (T )$.
\begin{enumerate}
\item [Operation  $\mathcal{T}_1$.] Assume $sta(v) = A.$ Then add a path $(x,y,z)$ and the edge $vx.$ Let $sta(x)=sta(y) = B$ and $sta(z) = A.$
\item [Operation  $\mathcal{T}_2$.] Assume $sta(v) =B.$ Then add a path $(x,y)$ and the edge $vx.$ Let $sta(x)=B$ and $sta(y)=A.$
\end{enumerate}
\end{itemize}

If $T\in \mathcal{F}$, we let $A(T)$ and $B(T)$ be the set of vertices of statuses $A$ and $B$, respectively, in $T$.


\begin{thm}\textnormal{\cite{fav1}}
For a tree T of order $n\geq 3$, 
\[
sd(T)=3 \textrm{ if and only if } T\in \mathcal{F}.
\]
\label{tw2}\hspace*{\fill} \rule{2mm}{2mm}
\end{thm}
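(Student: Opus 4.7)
My plan is to prove both implications by induction, using the labels $A$ and $B$ as a bookkeeping device that encodes where newly attached material sits with respect to minimum dominating sets.

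For sufficiency ($T\in\mathcal{F}\Rightarrow sd(T)=3$), I would induct on the length of the construction sequence that produces $T$ from $P_4$. Since $sd(T)\leq 3$ for every tree, it is enough to establish the stronger invariant: for every $T\in\mathcal{F}$ and every edge $e\in E(T)$, both $\gamma(T_e)=\gamma(T)$ and $\gamma(T_{e,2})=\gamma(T)$, i.e.\ no subdivision of a single edge by one or two vertices raises the domination number. The base case is $T=P_4$, verified by hand using $\gamma(P_4)=\gamma(P_5)=\gamma(P_6)=2$. For the inductive step, when $\mathcal{T}_1$ (resp.\ $\mathcal{T}_2$) attaches a new path at a vertex $v\in V(T)$ to form $T^{\ast}$, one produces a minimum dominating set of $T^{\ast}_e$ or $T^{\ast}_{e,2}$ from a minimum dominating set of the corresponding subdivision of $T$---or of $T$ itself when the subdivided edge lies in the newly attached path---by adjoining the two new $B$-vertices in the case of $\mathcal{T}_1$ and the one new $B$-vertex in the case of $\mathcal{T}_2$. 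The role played by $sta(v)$ is exactly what guarantees that the patched set still dominates and that the cardinalities match.

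For necessity ($sd(T)=3\Rightarrow T\in\mathcal{F}$), I would induct on $n=|V(T)|$. For $n\leq 4$, the only tree with $sd(T)=3$ is $P_4$, which lies in $\mathcal{F}$ by definition. For $n\geq 5$, pick a longest path $(v_0,v_1,\ldots,v_k)$ in $T$. Observation~\ref{wn1} together with $sd(T)\neq 1$ forces $v_1$ to be a weak support vertex, and a brief case analysis on the degree and neighbourhood of $v_2$---restricted by the hypothesis $sd(T)\neq 2$---exposes at the end of the path a ``terminal fragment'' of length two or three matching exactly the output of $\mathcal{T}_2$ or $\mathcal{T}_1$, respectively. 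Pruning this fragment yields a strictly smaller tree $T'$; one then shows $sd(T')=3$, applies the inductive hypothesis to conclude $T'\in\mathcal{F}$, and assigns status $A$ or $B$ to the attachment vertex $v\in V(T')$ so that the relevant operation regenerates $T$ from $T'$.

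The main obstacle is the necessity direction. The delicate points are: (a) verifying that pruning preserves $sd=3$---which requires ruling out that subdividing one or two edges of $T'$ could accidentally push $\gamma(T')$ up, typically handled by shuttling dominating sets between $T$ and $T'$; and (b) confirming that the attachment vertex in $T'$ already carries the status the operation requires, so that the recursive labeling is well-defined. Both rest on a tight analysis of private neighbours in minimum dominating sets near the end of a longest path, and on careful bookkeeping of the labels throughout the induction.
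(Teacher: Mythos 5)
First, a point of reference: the paper does not prove this theorem at all---it is quoted from \cite{fav1} with only a citation---so there is no in-paper proof to compare against. Your necessity direction does, however, closely parallel the paper's proof of Lemma~\ref{t2} (the analogous statement for the multisubdivision number): longest path, case analysis on the degree and neighbourhood of $v_2$, prune a terminal fragment, induct. That half of the plan is sound in outline, and you correctly flag the two delicate points (preserving $sd=3$ under pruning, and matching the status of the attachment vertex).

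The genuine gap is in your sufficiency invariant. You propose to show, for $T\in\mathcal{F}$ and each edge $e$, that $\gamma(T_e)=\gamma(T)$ and $\gamma(T_{e,2})=\gamma(T)$, and claim this suffices for $sd(T)=3$. It does not: $sd(T)\geq 3$ requires that subdividing any \emph{two distinct} edges, each once, leaves $\gamma$ unchanged, whereas $T_{e,2}$ is a single edge subdivided by two vertices. These are different graphs, and the distinction is exactly the difference between $sd$ and $msd$ that this paper is built around; the two parameters are incomparable in general (e.g.\ $msd(K_{p,q})=3$ while $sd(K_{p,q})=2$ for $3\leq p\leq q$), so you cannot silently trade one operation for the other. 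You would either have to strengthen the invariant to cover every set of at most two edges (same spirit, more cases in the inductive step, including the mixed case of one old and one new edge), or prove separately that for trees the one-edge-twice and two-edges-once operations affect $\gamma$ identically---which is essentially Theorem~\ref{tg} of the paper and would be circular here, since its proof rests on the present characterization. There is also a cardinality slip in your inductive step: under $\mathcal{T}_1$ the domination number grows by exactly $1$ (from $A(T)$ to $A(T)\cup\{z\}$), so adjoining \emph{two} new $B$-vertices to a minimum dominating set of the subdivided smaller tree yields a set of size $\gamma(T)+2>\gamma(T^{\ast})$, and the count does not close.
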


In order to prove Theorem~\ref{tg}, we will need the following Observation~\ref{obs2} and Lemma~\ref{lem1} made for trees belonging to the family $\mathcal{F}$.

\begin{obs}\textnormal{\cite{fav1}}
Let $T\in \mathcal{F}$ and $v\in V(T)$.
\begin{itemize}
\item[$(1)$] If $v$ is a leaf, then sta$(v)=A$. 
\item[$(2)$] If $v$ is a support vertex, then sta$(v)=B$.
\item[$(3)$] If sta$(v)=A$, then $N(v)\subseteq B(T)$.
\item[$(4)$] If sta$(v)=B$, then $v$ is adjacent to exactly one vertex of $A(T)$ and at least one vertex of $B(T)$.
\item[$(5)$] The distance between any two vertices in $A(T)$ is at least $3$.
\end{itemize}
\label{obs2}
\end{obs}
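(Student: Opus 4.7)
My plan is a straightforward structural induction on the number $k$ of applications of $\mathcal{T}_1,\mathcal{T}_2$ used to build $T$ from the base tree $P_4$, with all five assertions verified simultaneously. For the base case $k=0$, $T$ is the labelled $P_4=(a_1,b_1,b_2,a_2)$ with $A(T)=\{a_1,a_2\}$ and $B(T)=\{b_1,b_2\}$, and each of $(1)$--$(5)$ is immediate by inspection; in particular $d(a_1,a_2)=3$.

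For the inductive step, suppose $(1)$--$(5)$ hold for $T\in\mathcal{F}$ and let $T'$ be obtained from $T$ by attaching at $v\in V(T)$ either a path $(x,y,z)$ with statuses $B,B,A$ (operation $\mathcal{T}_1$, requires $\mathrm{sta}(v)=A$) or a path $(x,y)$ with statuses $B,A$ (operation $\mathcal{T}_2$, requires $\mathrm{sta}(v)=B$). The key observation that trivialises most of the bookkeeping is that $v$ is the only vertex of $T$ whose neighbourhood in $T'$ differs from its neighbourhood in $T$, and $v$ gains exactly one new neighbour ($x$), which is not a leaf of $T'$. Consequently no vertex of $T$ other than $v$ changes its leaf/support role, and $v$ itself cannot become a new support. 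The vertex $v$ can cease to be a leaf only if it was already a leaf of $T$, in which case IH~(1) gives $\mathrm{sta}(v)=A$, placing us in $\mathcal{T}_1$, consistent with (1) and (2). The only new leaf of $T'$ is $z$ (resp.\ $y$), whose label is $A$, and the only new support is $y$ (resp.\ $x$), whose label is $B$, so (1) and (2) persist.

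Properties (3) and (4) then reduce to local checks. At any vertex of $T$ different from $v$, the $T'$-neighbourhood equals the $T$-neighbourhood, so (3) and (4) follow from IH. At $v$ the single new neighbour $x$ has status $B$; this is compatible with (3) in $\mathcal{T}_1$ (where $\mathrm{sta}(v)=A$) and with (4) in $\mathcal{T}_2$ (where $\mathrm{sta}(v)=B$ and $x$ is an additional $B$-neighbour, so the unique $A$-neighbour condition is preserved). For each new vertex one reads off $N_{T'}(\cdot)$ and checks the required counts: in $\mathcal{T}_1$, $N_{T'}(x)=\{v,y\}$, $N_{T'}(y)=\{x,z\}$, $N_{T'}(z)=\{y\}$; in $\mathcal{T}_2$, $N_{T'}(x)=\{v,y\}$, $N_{T'}(y)=\{x\}$.

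The only step requiring a small distance calculation is (5). Since $T$ embeds isometrically into $T'$ (an attached pendant path cannot shortcut any existing path), pairs of vertices inside $A(T)$ retain their old distances $\geq 3$. In $\mathcal{T}_1$ one has $d_{T'}(z,v)=3$, and $d_{T'}(z,u)=3+d_T(v,u)\geq 3$ for every $u\in A(T)\setminus\{v\}$. In $\mathcal{T}_2$ the new $A$-vertex $y$ satisfies $d_{T'}(y,v)=2$, but $v\in B(T')$; hence for every $u\in A(T)$ one has $u\neq v$ and therefore $d_T(v,u)\geq 1$, giving $d_{T'}(y,u)\geq 3$. This last estimate is tight, and is the main (albeit minor) obstacle in the whole proof: one has to notice that the required bound relies precisely on the hypothesis of $\mathcal{T}_2$ that $\mathrm{sta}(v)=B$, i.e.\ $v\notin A(T)$.
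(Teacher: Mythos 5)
The paper gives no proof of this observation at all---it is quoted verbatim from Aram, Sheikholeslami and Favaron \cite{fav1}---so there is no in-paper argument to compare against. Your structural induction on the number of applications of $\mathcal{T}_1$ and $\mathcal{T}_2$ is correct and complete, and it is the natural (indeed essentially the only reasonable) way to prove properties of a constructively defined family: the base case $P_4$ is right, the local neighbourhood checks for $(1)$--$(4)$ are right, and you correctly isolate the single non-trivial point, namely that in $\mathcal{T}_2$ the estimate $d_{T'}(y,u)=2+d_T(v,u)\geq 3$ for $u\in A(T)$ depends on $v\notin A(T)$, i.e.\ on the hypothesis $\mathrm{sta}(v)=B$. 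One cosmetic imprecision: when $v$ is a leaf of $T$ and $\mathcal{T}_1$ is applied, the unique $T$-neighbour of $v$ may cease to be a support vertex, so the sentence ``no vertex of $T$ other than $v$ changes its leaf/support role'' is not literally true. This does not affect the argument, because $(1)$ and $(2)$ are one-directional implications and only \emph{newly created} leaves and support vertices need checking, which your proof does correctly.
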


\begin{lem}\textnormal{\cite{fav1}}
If $T\in \mathcal{F}$, then $A(T)$ is a $\gamma (T)$-set.
\label{lem1}\hspace*{\fill} \rule{2mm}{2mm}
\end{lem}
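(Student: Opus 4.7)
My plan is to prove that $A(T)$ is a $\gamma(T)$-set by establishing the two matching inequalities $\gamma(T) \leq |A(T)|$ and $\gamma(T) \geq |A(T)|$ directly from Observation~\ref{obs2}, without inducting on the construction of $\mathcal{F}$.

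For the upper bound I will show that $A(T)$ itself is a dominating set of $T$. Every vertex of $T$ carries status $A$ or $B$: vertices of status $A$ already belong to $A(T)$, while by Observation~\ref{obs2}(4) every $B$-vertex is adjacent to exactly one vertex of $A(T)$ and is thus dominated by $A(T)$. Consequently $\gamma(T) \leq |A(T)|$.

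For the lower bound I will exploit Observation~\ref{obs2}(5), which says that any two distinct vertices of $A(T)$ lie at distance at least $3$ in $T$. This forces the closed neighborhoods $\{N_T[a] : a \in A(T)\}$ to be pairwise disjoint, since a common $w \in N_T[a] \cap N_T[a']$ would give $d_T(a,a') \leq d_T(a,w) + d_T(w,a') \leq 2$, contradicting Observation~\ref{obs2}(5). Any dominating set $D$ of $T$ must intersect $N_T[a]$ for each $a \in A(T)$ in order to dominate $a$, so by pairwise disjointness $|D| \geq |A(T)|$, yielding $\gamma(T) \geq |A(T)|$. Combining the two bounds proves the lemma.

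I do not anticipate any substantive obstacle: once Observation~\ref{obs2} is available, parts (4) and (5) deliver the two inequalities almost immediately, and no case analysis on the operations $\mathcal{T}_1$ and $\mathcal{T}_2$ used to build $\mathcal{F}$ is required. The only point deserving explicit attention is that Observation~\ref{obs2}(4) asserts ``exactly one'' (and hence \emph{at least one}) $A$-neighbor of each $B$-vertex, which is precisely what makes the dominating-set verification go through.
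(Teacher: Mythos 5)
Your proof is correct. Note that the paper itself offers no proof of this lemma --- it is quoted from \cite{fav1} and stated with only the end-of-proof box --- so there is no in-paper argument to compare against; in \cite{fav1} the result is obtained alongside Observation~\ref{obs2} by induction on the sequence of operations $\mathcal{T}_1$, $\mathcal{T}_2$ building $T$. Your route takes Observation~\ref{obs2} as the black box and extracts the lemma from it by a clean packing argument: part (4) makes $A(T)$ a dominating set, and part (5) makes $A(T)$ a $2$-packing, so the closed neighbourhoods $N_T[a]$, $a\in A(T)$, are pairwise disjoint and every dominating set must meet each of them, giving $\gamma(T)\ge |A(T)|$. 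Both inequalities are justified correctly (every vertex of a tree in $\mathcal{F}$ does carry a status, so $V(T)=A(T)\cup B(T)$), and the disjointness step via the triangle inequality is sound. What your approach buys is that no case analysis on the constructive characterization of $\mathcal{F}$ is needed once Observation~\ref{obs2} is granted; in effect you observe that $A(T)$ is an efficient dominating set, which is slightly stronger than what the lemma asserts.
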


\begin{lem}
If $T$ is a tree with $sd(T)=3$, then $msd(T)=3$.
\label{t1}
\end{lem}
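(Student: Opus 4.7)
The plan is to show $\mbox{msd}(T)=3$. By Theorem~\ref{obs1}, $\mbox{msd}(T)\le 3$, and by Observation~\ref{o1}, the hypothesis $\mbox{sd}(T)=3$ forces $\mbox{msd}(T)\neq 1$. Hence it suffices to rule out $\mbox{msd}(T)=2$, equivalently to exhibit, for every edge $uv\in E(T)$, a dominating set of $T_{uv,2}$ of size $\gamma(T)$. By Theorem~\ref{tw2}, $T\in\mathcal{F}$, and by Lemma~\ref{lem1}, $A(T)$ is a $\gamma$-set of $T$ with $|A(T)|=\gamma(T)$. Let $x,y$ denote the two subdivision vertices, so that $T_{uv,2}$ contains the path $u-x-y-v$ in place of $uv$; since $A(T)$ is independent (Observation~\ref{obs2}(5)), at least one of $u,v$ lies in $B(T)$.

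I would construct the desired dominating set by swapping two vertices of $A(T)$ for two vertices lying on or adjacent to the subdivided path. When $u,v\in B(T)$, let $\alpha(u),\alpha(v)\in A(T)$ denote the unique $A$-neighbours of $u,v$ guaranteed by Observation~\ref{obs2}(4); they must be distinct, since otherwise $\{\alpha(u),u,v\}$ would form a triangle in $T$. The candidate dominating set is
\[
D_1=(A(T)\setminus\{\alpha(u),\alpha(v)\})\cup\{u,v\}.
\]
When $u\in A(T)$ and $v\in B(T)$, so $\alpha(v)=u$, I pick a $B$-neighbour $v'\neq u$ of $v$ (which exists by Observation~\ref{obs2}(4)) and set $\beta=\alpha(v')$; then $\beta\neq u$, because $u$ is not adjacent to $v'$ in $T$ (otherwise $\{u,v,v'\}$ would form a triangle). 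The candidate then becomes
\[
D_2=(A(T)\setminus\{u,\beta\})\cup\{x,v'\}.
\]
In both cases $|D_i|=|A(T)|$, and the two added vertices dominate all four path vertices $u,x,y,v$ together with the two removed $A$-vertices.

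The main obstacle is that, by Observation~\ref{obs2}(3)--(4), removing an $A$-vertex $a$ from $A(T)$ orphans its entire closed neighbourhood $\{a\}\cup N_T(a)$, since every $B$-neighbour of $a$ has $a$ as its unique $A$-neighbour. In Case~1 the added vertices $u,v$ dominate $\alpha(u),\alpha(v)$ themselves but not, in general, the remaining $B$-neighbours of $\alpha(u)$ or $\alpha(v)$; the situation in Case~2 is analogous. When the involved $A$-vertices have multiple $B$-dependents a single swap is not sufficient, and one has to iterate along the tree, replacing each freshly orphaned $A$-vertex by one of its own $B$-neighbours. A cleaner organisation of this argument is by induction on the number of operations $\mathcal{T}_1,\mathcal{T}_2$ used to build $T$ from $P_4$: the base case $T=P_4$ is immediate since $T_{uv,2}\cong P_6$ with $\gamma(P_6)=2=\gamma(P_4)$ for every edge; in the inductive step, edges inherited from $T'$ are handled by extending a dominating set of $T'_{uv,2}$ (provided by the inductive hypothesis) with the middle vertex of the newly attached path, while the edges introduced by the operation itself are treated by a direct construction that exploits the fact that the subdivided attached path has length at most $5$ and can be dominated by two appropriately chosen vertices.
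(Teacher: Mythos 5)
Your reduction is the right one and coincides with the paper's: by Theorem~\ref{obs1} and Observation~\ref{o1} it suffices to show $\mathrm{msd}(T)>2$, i.e.\ to exhibit, for every edge $uv$, a dominating set of $T_{uv,2}$ of cardinality $|A(T)|$. But the construction you offer does not achieve this, and you say so yourself. The obstacle you identify is real and not a removable technicality: already for $T=P_7=(z_1,y_1,x_1,a_1,b_1,b_2,a_2)\in\mathcal{F}$ (obtained from $P_4=(a_1,b_1,b_2,a_2)$ by one application of $\mathcal{T}_1$ at $a_1$, so that $A(T)=\{z_1,a_1,a_2\}$) and the edge $uv=b_1b_2$, your set $D_1=\{z_1,b_1,b_2\}$ leaves $x_1$ undominated in $T_{uv,2}$. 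A two-vertex swap cannot be repaired locally: by Observation~\ref{obs2}(4) every $B$-vertex has a \emph{unique} $A$-neighbour, so deleting $\alpha(u)$ orphans all of its $B$-neighbours other than $u$, and re-covering them forces further replacements farther from the subdivided edge. What is missing is exactly the global construction the paper uses: \emph{every} vertex $a\in A(T)$ is replaced by a vertex on the $(a-\{x,y\})$--path, i.e.\ each $A$-vertex is shifted one step toward the subdivided edge simultaneously, so that each orphaned $B$-vertex is picked up by the shift of its own $A$-neighbour; only the two vertices nearest the subdivision are then adjusted by hand.

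Your two suggested repairs remain gestures. The ``iterate along the tree'' idea is the right instinct --- carried to completion it becomes the global shift just described --- but you neither define the iteration, nor show it terminates with a set of the same cardinality, nor verify domination afterwards. The inductive alternative runs into the same unresolved difficulty at the attachment vertex: for a new edge such as $vx$ created by $\mathcal{T}_1$, the set $A(T')$ together with one extra vertex cannot dominate the five vertices of the subdivided attached path (one vertex dominates at most three consecutive vertices of a path, and $v$ covers only the first), so one must again move $v$ out of the set and trigger the same cascade among the $B$-neighbours of $v$; the assertion that ``two appropriately chosen vertices'' suffice is precisely the statement that needs proof. As it stands, the proposal frames the problem correctly and diagnoses the difficulty accurately, but it does not prove the lemma.
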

\begin{proof}
Let $T$ be a tree with sd$(T)=3$. Thus, by Theorem \ref{tw2}, $T\in \mathcal{F}$ and by Lemma \ref{lem1}, $A(T)$ is a $\gamma (T)$-set. 

By Theorem~\ref{obs1}, in order to prove the statement, it is enough to show that $msd(T)>2$. 

Let $uv\in E(T)$ be any edge. Then by Observation~\ref{obs2}, two cases are possible: either $\{sta(u), sta(v)\}=\{B\}$ or $\{sta(u), sta(v)\}=\{A, B\}$. We subdivide $uv$ with two vertices $x$ and $y$. Now we construct a minimum dominating set $D$ of $T_{uv,2}$ in a following way: we start with $A(T)$ and every vertex $a\in A(T)$ we replace with a vertex $a'$ which belongs to $(a-\{x,y\})$--path. If sta$(u)=$sta$(v)=B$, then $\{u,v\}\subset D$. If sta$(u)=A$ and sta$(v)=B$, then $\{x,v\}\subset D$. By Observation~\ref{obs2}, it is easy to check that $D$ is a dominating set of $T_{uv,2}$ and that $|D|=|A(T)|$. Since subdivision of the edge can not decrease the domination number of a graph, $D$ is a $\gamma(T_{uv,2})$--set. Hence, $\gamma(T)=\gamma(T_{uv,2})$, what implies msd$(T)=3$.
\end{proof}

\begin{lem}
If T is a tree with msd$(T)=3$, then sd$(T)=3$.
\label{t2}
\end{lem}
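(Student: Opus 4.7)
I would prove the contrapositive form: sd$(T) \neq 3 \Rightarrow $ msd$(T) \neq 3$. Both parameters lie in $\{1,2,3\}$ for trees of order $\geq 3$ (by Velammal~\cite{vel} and Theorem~\ref{obs1}), and by Observation~\ref{o1} they coincide at the value $1$, so the substantive case is sd$(T)=2 \Rightarrow $ msd$(T) \leq 2$. Fixing edges $e_1=u_1v_1, e_2=u_2v_2$ with $\gamma(T_{e_1,1,e_2,1}) > \gamma(T)$, I would assume for contradiction that msd$(T)=3$ (so $\gamma(T_{e,2})=\gamma(T)$ for every edge $e$ of $T$) and aim to produce a dominating set of $T_{e_1,1,e_2,1}$ of cardinality $\gamma(T)$, contradicting the hypothesis.

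The main tool is an exchange lemma refining the argument of Theorem~\ref{obs1} applied to $T_{uv,2}$: if $\gamma(T_{uv,2})=\gamma(T)$ with subdivision path $u-x-y-v$, a $\gamma(T_{uv,2})$-set $D$ with $|D \cap \{x,y\}|=1$ must satisfy $u,v \notin D$, and $(D \setminus \{x\}) \cup \{u\}$, respectively $(D \setminus \{y\}) \cup \{v\}$, is then a $\gamma(T)$-set depending on whether $x$ or $y$ belongs to $D$. A secondary observation is that the sub-case $y \in D$ cannot occur when $u$ is a leaf of $T$ (and symmetrically for $v$), so both sub-cases coexist only when neither endpoint of $uv$ is a leaf.

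For the adjacent case, when $e_1,e_2$ share a vertex $v$, I would apply the exchange lemma to $T_{e_1,2}$ in the $y \in D$ sub-case to extract a $\gamma(T)$-set $D^*$ with $v \in D^*$. This sub-case is available because msd$(T)=3$ precludes $v$ from being a strong support (Observation~\ref{wn1}), so the two far endpoints of $e_1,e_2$ cannot both be leaves; if the $y \in D$ sub-case happens to fail for $T_{e_1,2}$, one switches to the symmetric sub-case for $T_{e_2,2}$. Since $v$ is a common neighbour of the two subdivision vertices of $T_{e_1,1,e_2,1}$, both are dominated by $D^*$; the remaining domination requirements at the far endpoints are verified from the fact that the pre-exchange set $D$ already dominated them using private neighbours (in the sense of PN from Section~2) distinct from $v$, and these witnesses survive the exchange.

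For the vertex-disjoint case, one would apply an analogous but more delicate argument along the unique $T$-path joining $e_1$ and $e_2$: each consecutive pair of edges on this path is adjacent and can be handled by the previous step, and the local exchanges are composed to transport the marked endpoint from the $e_1$-side to the $e_2$-side. The main obstacle is this vertex-disjoint case, where one must verify that the composed exchanges yield a single $\gamma(T)$-set meeting both $\{u_1,v_1\}$ and $\{u_2,v_2\}$ and still dominating $T_{e_1,1,e_2,1}$ (not merely $T$), noting that endpoints of a subdivided edge lose their mutual domination after subdivision; the resulting private-neighbour bookkeeping is precisely of the flavour of the detailed case analysis commented out in the proof of Lemma~\ref{t1}.
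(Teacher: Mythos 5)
Your overall plan---prove the contrapositive, reduce to ``sd$(T)=2\Rightarrow$ msd$(T)\le 2$'', and argue by exchanging subdivision vertices for endpoints in minimum dominating sets of $T_{e,2}$---is a genuinely different route from the paper, which instead proves msd$(T)=3\Rightarrow T\in\mathcal{F}$ by induction on $n$ along a longest path and then invokes the characterization sd$(T)=3\Leftrightarrow T\in\mathcal{F}$ of \cite{fav1}. Your exchange lemma itself is correct: if $\gamma(T_{uv,2})=\gamma(T)$ and $D$ is a $\gamma(T_{uv,2})$-set with, say, $x\in D$, $y\notin D$ on the path $(u,x,y,v)$, then indeed $u,v\notin D$ and $(D-\{x\})\cup\{u\}$ is a $\gamma(T)$-set in which $v$ retains a dominator in $N_T(v)-\{u\}$. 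But the argument built on it has two genuine gaps.

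First, in the adjacent case you need a $\gamma(T)$-set containing the common vertex $v$ together with dominators of the far endpoints avoiding $v$, and you claim the required sub-case ($y\in D$ for $T_{e_1,2}$, or its mirror for $T_{e_2,2}$) is ``available'' because the far endpoints are not both leaves. This conflates \emph{not excluded} with \emph{realized}: which sub-case occurs is determined by which minimum dominating sets $T_{e_i,2}$ actually possesses, not by your choice, and nothing rules out that every normalized $\gamma(T_{e_1,2})$-set falls into the $x\in D$ sub-case (yielding only the far endpoint $u_1$) while every $\gamma(T_{e_2,2})$-set does the symmetric thing. You also omit the sub-case $x,y\notin D$, where both endpoints of $e_1$ lie in $D$ but the far endpoint of $e_2$ may be dominated only by $v$ and hence becomes undominated after the second subdivision. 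Second, and more seriously, the vertex-disjoint case---which you yourself identify as the main obstacle---is not argued at all: ``composing local exchanges along the path'' does not produce a proof, because each exchange is performed inside a \emph{different} minimum dominating set (of a different graph $T_{e,2}$), and no mechanism is given for merging these into a single set of size $\gamma(T)$ that dominates $T_{e_1,1,e_2,1}$. That merging step is exactly the content of the lemma, so as it stands the proposal is incomplete; the paper's detour through the constructive family $\mathcal{F}$ (Theorem~\ref{tw2}, Observation~\ref{obs2}, Lemma~\ref{lem1}) is precisely what supplies the global structure that your local exchanges do not.
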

\begin{proof}
Let $T$ be a tree with msd$(T)=3$. By Theorem \ref{tw2}, it is enough to show that $T\in \mathcal {F}$. We consider trees with diam$(T)\geq 3$ (because for trees with diam$(T)\leq 2$ we have msd$(T)\leq 2$). Moreover, it is no problem to check that the result is true for all trees with at most~4 vertices: the only tree $T$ with msd$(T)=3$ and with at most~4 vertices is $P_4$ which belongs to $\mathcal {F}$. We continue the proof by induction on $n$, number of vertices of $T$. Assume that every tree $T'$ with $n'<n$ vertices such that msd$(T') = 3$ belongs to the family~$\mathcal{F}$.

Now, let $T$ be a tree with msd$(T)=3$, diam$(T)\geq 3$ and $n>4$. Then $\gamma (T)=\gamma(T_{e,2})$ for every edge $e\in E(T)$. Let $P=(v_0,v_1,v_2,\ldots ,v_k)$ be a longest path such that the degree of a vertex $v_2$ is as big as possible. It follows by Observation~\ref{wn1} that $d(v_1)=2$ (as otherwise $v_1$ is a strong support vertex and then msd$(T)=1$). Now we consider cases:
\begin{itemize}
\item[]\emph{Case 1. }$d(v_2)=2$.\\
Since msd$(T)=3$, $v_3$ is neither a support vertex nor a neighbor of a support vertex (as otherwise $\gamma(T_{v_1v_2,2})>\gamma (T)$). Thus, outside the path $P$, only $P_3$'s may be attached to $v_3$. We consider the tree $T'=T-\{v_0,v_1,v_2\}$. It is no problem to see that $\gamma(T)=\gamma (T')+1$. Moreover, for every edge $e\in E(T')$ we have $\gamma(T'_{e,2})=\gamma(T_{e,2})-1=\gamma(T)-1=\gamma(T')$. 
Hence, msd$(T')=3$ and from induction hypothesis $T'\in \mathcal{F}$. From the construction of a family $\mathcal {F}$ we know sta$(v_3)=A$. Thus $T$ can be obtained from $T'$ by Operation $\mathcal{T}_1$, where sta$(v_2)=$sta$(v_1)=B$ and sta$(v_0)=A$.

\item[]\emph{Case 2. }$d(v_2)>2$ and $v_2$ is a support vertex, say $v'_2$ is the leaf adjacent to $v_2$. \\
By Observation~\ref{wn1}, $v_2$ is adjacent to only one leaf. We consider the tree $T'=T-\{v_0,v_1\}$. It is obvious that $\gamma (T)=\gamma (T')+1$. Since msd$(T)=3$ and $v_1,v_2$ are support vertices, we have $\gamma(T'_{e,2})=\gamma(T_{e,2})-1=\gamma(T)-1=\gamma(T')$ for every edge $e\in E(T')-\{v_2v'_2\}$. This also implies that there exists a $\gamma (T')$-set $D'$ containing $v_2$ and $v_3$. We subdivide the edge $v'_2v_2$ with vertices $x$ and $y$. Then $(D'-\{v_2\})\cup \{x\}$ is a $\gamma$-set in $T'_{v_2v'_2,2}$ and $\gamma(T')=\gamma(T'_{v_2v'_2,2})$. Therefore $T'\in \mathcal{F}$ with sta$(v_2)=B$, and $T$ can be obtained from $T'$ by Operation $\mathcal{T}_2$, where sta$(v_1)=B$ and sta$(v_0)=A$.

\item[] \emph{Case 3. }$d(v_2)>2$ and $v_2$ is not a support vertex. \\
Then $v_2$ is adjacent to at least two support vertices. Let $T'=T-\{v_0,v_1\}$. Again $\gamma (T)=\gamma (T')+1$. Since msd$(T)=3$, there exist a minimum dominating set which contains $v_2$. Therefore for every edge $e\in E(T')$ we obtain $\gamma(T'_{e,2})=\gamma(T_{e,2})-1=\gamma(T)-1=\gamma(T')$. Hence, $T'\in \mathcal{F}$, sta$(v_2)=B$ and $T$ can be obtained from $T'$ by Operation $\mathcal{T}_2$, where sta$(v_1)=B$ and sta$(v_0)=A$.
\end{itemize}

In all these cases $T\in \mathcal{F}$.
\end{proof}

Now, Theorem~\ref{tg} is an immediate consequence of Lemmas~\ref{t1}, \ref{t2} and Observation~\ref{o1}.

\subsection{Trees with the domination multisubdivision number equal to 1}
In this subsection we shortly present a characterization of all trees $T$ with msd$(T)=1$. This characterization is an immediate consequence of Observation~\ref{o1} and results of Benecke and Mynhardt in~\cite{myn}, where they have characterized all trees with domination subdivision number equal to~1. Let $\mathcal{N}(G)$ consists of those vertices which are not contained in any $\gamma (G)$-set.
\begin{wn}\label{tt1}
For a tree T of order $n\geq 3$, $msd(T)=1$ if and only if $T$ has 
\begin{itemize}
\item[$i)$] a leaf $u\in \mathcal{N}(T)$ or
\item[$ii)$] an edge $xy$ with $x,y\in \mathcal{N}(T)$.
\end{itemize}
\end{wn}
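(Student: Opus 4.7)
The plan is to derive this corollary as an almost one-line consequence of Observation~\ref{o1} together with the Benecke--Mynhardt characterization~\cite{myn} of trees with domination subdivision number one. Observation~\ref{o1} gives the equivalence $msd(T)=1 \Leftrightarrow sd(T)=1$, and the cited paper states precisely that $sd(T)=1$ holds if and only if $T$ admits a leaf in $\mathcal{N}(T)$ or an edge both of whose endpoints lie in $\mathcal{N}(T)$. Chaining these two equivalences yields the corollary, so nothing beyond quoting these results is formally required.

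For motivation, I would briefly sketch why conditions (i) and (ii) appear. Suppose $msd(T)=1$, so some edge $uv$ can be subdivided once, say by a new vertex $w$, to strictly increase $\gamma$. If either endpoint, say $u$, belonged to some $\gamma(T)$-set $D$, then $D$ would still dominate $T_{uv}$ (because $u$ dominates $w$), contradicting $\gamma(T_{uv})>\gamma(T)$. Therefore $u,v\in\mathcal{N}(T)$, which gives condition (ii) outright, unless one of $u,v$ is a leaf, in which case (i) applies.

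For the converse, given (i) with leaf $u$ and its support vertex $v$, I would subdivide $uv$ with a new vertex $w$ and argue that any $\gamma(T_{uv})$-set of size $\gamma(T)$ can be transformed, by replacing $w$ with $u$ when $w$ is in the set (and by exchanging on $\{u,w\}$ otherwise), into a $\gamma(T)$-set containing $u$, contradicting $u\in\mathcal{N}(T)$. Case (ii) is handled analogously by subdividing $xy$ and exchanging the subdivision vertex for $x$ or $y$.

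The proof has no real obstacle: once Observation~\ref{o1} is in hand, the statement is genuinely a corollary of the Benecke--Mynhardt theorem, and the case analyses sketched above are essentially the content of that paper rather than something to be reproduced here.
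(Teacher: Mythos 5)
Your proposal matches the paper exactly: the paper offers no separate proof, presenting the corollary as an immediate consequence of Observation~\ref{o1} combined with the Benecke--Mynhardt characterization of trees with $sd(T)=1$, which is precisely the chaining of equivalences you give. Your additional motivational sketch is not load-bearing (and is slightly imprecise --- if $u\in D$ but $v$'s only dominator in $D$ is $u$, then $D$ need not dominate $T_{uv}$), but since the formal argument rests entirely on the two quoted results, the proof stands as the paper intends.
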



\begin{thebibliography}{99}
\bibitem{fav1}H. Aram, S.M. Sheikholeslami, O. Favaron, Domination subdivision number of trees, \emph{Discrete Math.} 309 (2009), 622--628.

\bibitem{myn}S. Benecke, C. M. Mynhardt, Trees with domination subdivision number one, \emph{Australasian J. Combin.} 42 (2008), 201--209.

\bibitem{bhv} A. Bhattacharya, G.R. Vijayakumar, Effect of edge--subdivision on vertex--domination in a graph, \emph{Discuss. Math. Graph Theory} 22 (2002),  335--347.

\bibitem{fav3} O. Favaron, T.W. Haynes, S.T. Hedetniemi, Domination subdivision numbers in graphs, \emph{Utilitas Mathematica} 66 (2004), 195--209.

\bibitem{fav2} O. Favaron, H. Karami, S.M. Sheikholeslami, Disproof of a conjecture on the subdivision domination number of a graph, \emph{Graphs and Combinatorics} 24 (2008), 309--312.

\bibitem{GJ79} M.R. Garey, D.S. Johnson, {\em Computers and Intractability: A Guide to the Theory of NP-Completeness}, Freeman, San Francisco ($1979$).

\bibitem{HHH}
T.W. Haynes, S.M. Hedetniemi, S.T. Hedetniemi, Domination and independence subdivision numbers of graphs, {\it Discussiones Math. Graph Theory} {\bf 20}  (2000), 271--280.

\bibitem{fund}T.W. Haynes, S.T. Hedetniemi and P.J. Slater, Fundamentals of Domination in Graphs (Marcel Dekker Inc., New York, 1998).


\bibitem{vel}S. Velammal, Studies in graph theory: covering, independence, domination and related topics, Ph.D. Thesis, Manonmaniam Sundaranar University,
Tirunelveli, 1997.

\end{thebibliography}
\end{document}